\newtheorem{theorem}{Theorem}
\newtheorem{lemma}[theorem]{Lemma}
\newtheorem{defn}{Definition}[section]
\numberwithin{equation}{section}
\def\qed{{\hfill $\Box$ \bigskip}}
\def\Rd{\mathbb{R}^d}
\def\P{{\mathbb P}}
\def\pf{\noindent{\bf Proof.} }
\title{The backbone decomposition for superprocesses with non-local branching}
\author{{\sc A. Murillo-Salas\footnote{ Departamento de Matem\'aticas, Universidad de Guanajuato,
Jalisco s/n, Mineral de Valenciana,
Guanajuato, Gto. C.P. 36240, M\'exico. E-mail: amurillos@ugto.mx}\ \ and \ \ J.L. P\'erez\footnote{ Department of Probability and Statistics, IIMAS-UNAM, 01000 Mexico, D.F. E-mail: garmendia@sigma.iimas.unam.mx
}
}}
\begin{document}
\maketitle
\begin{abstract}
 We provide a path-wise "backbone" decomposition for supercritical superprocesses with non-local branching. Our result complements a related result obtained for supercritical superprocesses without non-local branching in \cite{am}. Our approach relies heavily on the use of so-called  Dynkin-Kuznetsov $\mathbb{N}$-measures.

\vspace{0.5cm}
\noindent{\sc MSC}: 60J80; 60E10

\noindent {\it Key words and phrases}: Superprocesses, backbone decomposition, non-local branching.

\end{abstract}
\section{Introduction} 
 In this note we consider  any superprocess $X=\{X_t:t\geq0\}$  on $\mathbb{R}^d$ which is well defined for initial 
configurations 
 $\mu\in\mathcal{M}_C(\Rd)$, the space of finite and compactly supported measures, having  associated a conservative diffusion 
semigroup $\mathcal{P}=\{\mathcal{P}_t:t\geq0\}$ on $\mathbb{R}^d$ and a branching mechanism $\psi$ of the form
\begin{equation}\label{bm1}
\psi(x,f,z)=\psi^{L}(x,z)+\psi^{NL}(x,f),\,\,\,\,x\in\Rd,\,z\geq0, f\in B^+(\Rd),
\end{equation}
where $B^+(\Rd)$ denotes the set of positive measurable functions on $\Rd$, i.e., we consider  superprocesses with non-local branching 
(See \cite{dlg}). The first term corresponds to the branching mechanism related to the local branching of the superprocess $X$, and according to \cite{dlg} it takes 
the following form
\begin{equation}
\psi^{L}(x,z)=\alpha(x) z+\beta(x) z^2+\int_0^{\infty}(e^{-zu}-1+zu)\Pi^{L}(x,du),\,\,\,\,x\in\Rd,\,z\geq0,\notag
\end{equation}
 for bounded measurable functions $\alpha:\mathbb{R}^d\to\mathbb{R}$, $\beta:\mathbb{R}^d\to\mathbb{R}_+$, and $(u\wedge u^2)\Pi^{L}$ is a bounded 
 kernel from  $\mathbb{R}^d$ to $(0,\infty)$ (i.e. the application $x\to\int_{\Rd}(u\wedge u^2)\Pi^{L}(x,du)$ is bounded on $\Rd$). On the other hand, the second term in the right hand side of (\ref{bm1}) is related to non-local 
 branching which takes the form (cf. \cite{dlg}) 
\begin{equation}\label{bm}
\psi^{NL}(x,f)=(f(x)-\zeta(x,f)),\qquad\text{$x\in \Rd$, $f\in B^+(\mathbb{R}^d)$},\notag
\end{equation}
with
\begin{equation}
\zeta(x,f)=\int_{M_0(\Rd)}\left(\gamma(x,\pi)\pi(f)+\int_0^{\infty}(1-e^{-u\pi(f)})\Pi^{NL}(x,\pi,du)\right)G(x,d\pi),\notag
\end{equation}
where $\gamma\in B^+(\Rd\times M_0(\mathbb{R}^d))$ ($M_0(\mathbb{R}^d)$ denotes the set of probability measures on $\mathbb{R}^d$), $u\Pi^{NL}(x,\pi,du)$ is a bounded kernel from 
$\Rd\times M_0(\mathbb{R}^d)$ to $(0,\infty)$ and $G(x,d\pi)$ is a probability kernel from $\Rd$ to $M_0(\mathbb{R}^d)$ with
\begin{equation}
\gamma(x,\pi)+\int_0^{\infty}u\Pi^{NL}(x,\pi,du)\leq 1.\notag
\end{equation}
In fact,  $X$ is a Markovian $\mathcal{M}_C(\Rd)$-valued process whose one-dimensional distributions are characterised by the following result
\begin{lemma} (Lemma 3.3 in \cite{dlg})
For all $f\in bp(\Rd)$, the space of non-negative, bounded measurable functions on $\mathbb{R}^d$,
\begin{equation}
-\log \mathbb{E}_\mu(e^{-\langle f, X_t\rangle})=\int_{\mathbb{R}^d}u_f(x,t)\mu(dx), \qquad\text{$\mu\in\mathcal{M}_C(\mathbb{R}^d)$, $t\geq0.$}\notag
\end{equation}
where $u_f(x,t)$ is the unique non-negative solution to the integral equation
\begin{equation}\label{ed1}
u_f(x,t)=\mathcal{P}_t[f](x)-\int_0^t\mathcal{P}_s[\psi^{L}(\cdot,u_f(\cdot,t-s))+\psi^{NL}(\cdot,u_f(\cdot,t-s))](x).
\end{equation}
\end{lemma}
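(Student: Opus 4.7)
The plan is to combine the branching property of the superprocess $X$ with the Markov property to first establish the exponential-linear representation, and then derive (\ref{ed1}) as a Duhamel form of the associated nonlinear evolution equation. Since $X$ is a superprocess, $\mu\mapsto\mathbb{E}_\mu(e^{-\langle f,X_t\rangle})$ is multiplicative in $\mu\in\mathcal{M}_C(\Rd)$: if $\mu=\mu_1+\mu_2$ then $X$ started from $\mu$ is distributed as the independent sum of copies started from $\mu_i$. Extending from Dirac masses, this gives a function
\[
u_f(x,t):=-\log\mathbb{E}_{\delta_x}(e^{-\langle f,X_t\rangle})\ge 0,\qquad u_f(x,0)=f(x),
\]
for which $-\log\mathbb{E}_\mu(e^{-\langle f,X_t\rangle})=\int_{\Rd}u_f(x,t)\,\mu(dx)$ holds for every $\mu\in\mathcal{M}_C(\Rd)$.

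Next, I would apply the Markov property at time $s\in(0,t)$ and feed the preceding identity back in:
\[
\mathbb{E}_\mu\bigl(e^{-\langle f,X_t\rangle}\bigr)=\mathbb{E}_\mu\bigl[\mathbb{E}_{X_s}(e^{-\langle f,X_{t-s}\rangle})\bigr]=\mathbb{E}_\mu\bigl[\exp(-\langle u_f(\cdot,t-s),X_s\rangle)\bigr],
\]
which yields the cocycle relation $u_f(x,t)=u_{u_f(\cdot,t-s)}(x,s)$. Formally differentiating at $s=0^+$ using the weak generator of $X$ acting on exponential functionals $F(\mu)=e^{-\langle g,\mu\rangle}$, with $\mathcal{L}$ the generator of the diffusion semigroup $\mathcal{P}$, produces the Cauchy problem
\[
\partial_t u_f(x,t)=\mathcal{L}u_f(x,t)-\psi^{L}\bigl(x,u_f(x,t)\bigr)-\psi^{NL}\bigl(x,u_f(\cdot,t)\bigr),\qquad u_f(x,0)=f(x).
\]
Variation of constants (Duhamel) with linear part $\mathcal{P}$ then delivers exactly the integral equation (\ref{ed1}). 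Uniqueness of non-negative bounded solutions to (\ref{ed1}) on each finite interval $[0,T]$ follows by a standard Picard/Gronwall argument: the boundedness of $\alpha,\beta,\gamma$ together with the kernel hypotheses on $(u\wedge u^2)\Pi^{L}$ and $u\Pi^{NL}$ makes $u\mapsto\psi^{L}(\cdot,u(\cdot))+\psi^{NL}(\cdot,u)$ Lipschitz on the ball of radius $\|f\|_\infty$ in $L^\infty([0,T]\times\Rd)$, and $\mathcal{P}_t$ is a contraction on $bp(\Rd)$.

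The main obstacle, and where one must be most careful, is the rigorous passage from the cocycle identity to the evolution equation in the presence of the non-local term $\psi^{NL}$: unlike $\psi^{L}$, it depends on the whole function $u_f(\cdot,t-s)$ through the probability kernel $G(x,d\pi)$ rather than only on its pointwise value, so the generator computation does not reduce to a classical one-point operator. My preferred way to finesse this is to bypass differentiation altogether and obtain (\ref{ed1}) directly: approximate $X$ by a sequence of branching particle systems with local and non-local mechanisms whose Laplace functionals satisfy discrete analogues of (\ref{ed1}) by a first-jump decomposition, and then pass to the limit using dominated convergence underwritten by the boundedness of the kernels and the conservativity of $\mathcal{P}$.
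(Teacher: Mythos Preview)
The paper does not supply its own proof of this lemma; it is quoted as Lemma~3.3 of Dawson--Gorostiza--Li \cite{dlg} and used as a black-box input to the rest of the article. In \cite{dlg} the superprocess with non-local branching is constructed as a high-density limit of branching particle systems, and the integral equation (\ref{ed1}) is obtained by passing to the limit in the first-branch decomposition of the particle Laplace functionals --- precisely the route you propose in your final paragraph as a way to ``finesse'' the generator difficulty. So your fallback approach is in fact the canonical one here, not a detour.

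Your primary line (branching property $\Rightarrow$ exponential form, Markov property $\Rightarrow$ cocycle, differentiate at $s=0^+$, then Duhamel) is a correct heuristic and works once the action of the weak generator on exponentials $e^{-\langle g,\mu\rangle}$ is justified; as you rightly note, the non-local term makes this a functional rather than a pointwise computation. One further point to tighten in your uniqueness sketch: the Gronwall argument presupposes that any candidate non-negative solution of (\ref{ed1}) stays in the ball of radius $\|f\|_\infty$ on $[0,T]$, but this a~priori upper bound is not automatic from the equation (and in general $u_f$ can exceed $\|f\|_\infty$ when the mean mass grows). You need either to restrict uniqueness to locally bounded solutions from the outset, or to supply a separate comparison argument --- e.g.\ domination by the Laplace exponent of the total-mass CSBP, as is done later in the paper for $u_f^*$ --- before the Lipschitz estimate can be applied uniformly.
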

We call $(X,\P_\mu)$ a $(\mathcal{P},\psi^L,\psi^{NL})$-superprocess started at $\mu\in\mathcal{M}_C(\Rd)$. 

The goal of this note is to give a path-wise backbone decomposition for a $(\mathcal{P},\psi^L,\psi^{NL})$-superprocess, similar
to the work  \cite{am} where the non-local branching is not considered. Loosely speaking, the backbone decomposition is a way
to reconstruct a supercritical superprocess from a branching particle system ({\it called the backbone}) together with some sources
($(\mathcal{P},\psi^L,\psi^{NL})$-superprocesses conditioned to die)
of Poissonian immigration along the paths of the particles in the backbone. Such a decomposition has been done in \cite{EvansOconnel} for 
a quadratic superprocess from the analitic point of view. Since then there has been a lot of interest in finding a path-wise 
backbone decomposition for several different models of superprocesses  due to a variety of applications that have been found 
(e.g. \cite{KMSP,Milos}).

 Very recently, in  \cite{KPR}, the authors provide the backbone decomposition for a quite general spatially dependent supercritical 
superprocess without non-local branching. See \cite{KPR} Section 2 for a summary of some backbone decompositions found in the literature.  
Here, we are interested in the effects that the non-local 
branching has on the backbone decomposition, hence thoroughout this paper we drop out the assumption of having a
spatially dependent branching mechanism. Namely, we consider
\begin{equation*}
\psi^{L}(z)=\alpha z+\beta z^2+\int_0^{\infty}(e^{-z u}-1+z u)\Pi^{L}(du),\,\,z\geq0,
\end{equation*}
with $\alpha\in\mathbb{R}$, $\beta\geq0$, and $\Pi^L$ a measure concentrated in $(0,\infty)$ such that  
$\int_0^{\infty}(u\wedge u^2)\Pi^{L}(du)<\infty$. For the non-local branching we  assume  that the probability kernel $G(x,d\pi) \equiv$ unit mass 
at some $\pi(x,\cdot)\in M_0(\mathbb{R}^d)$. For a measurable function  $f$ we set  $\pi(x,f)\equiv\int_{\Rd}f(y)\pi(x,dy)$. In this case, the non-local branching mechanism is given by
\begin{equation}
\psi^{NL}(x,f)=f(x)-\zeta(\pi(x,f)),\qquad\text{$x\in \mathbb{R}^d$, $f\in B^+(\mathbb{R}^d)$},\notag
\end{equation}
where
\begin{equation}
\zeta(\lambda)=\gamma\lambda+\int_0^{\infty}(1-e^{-\lambda u})\Pi^{NL}(du),\qquad\text{ $\lambda\geq0$},\notag
\end{equation}
where $\gamma\geq0$ and $\int_0^\infty u\Pi^{NL}(du)<\infty$ is such that 
\begin{equation}\label{c1}
\gamma+\int_0^{\infty}u\Pi^{NL}(du)\leq 1.\notag
\end{equation}
Putting all together the above assumptions, we get that the mild equation (\ref{ed1}) satisfied by the semigroup $u_f$ can be written as 
\begin{equation}\label{ed2}
u_f(x,t)=\mathcal{P}_t[f](x)-\int_0^t\mathcal{P}_s[\phi^{L}(u_f(\cdot,t-s))+\phi^{NL}(\cdot,u_f(\cdot,t-s))](x),
\end{equation}
where $\phi^L(z)=\psi^L(z)+z$ for $z\geq0$, and $\phi^{NL}(x,f)=\psi^{NL}(x,f)-f(x)$ for $x\in \Rd$, $f\in B^+(\mathbb{R}^d)$.

The note is organised as follows. Section 2 contains the backbone decomposition given in \cite{am}, when non-local branching is not taken into account. In Section 3 we obtain the superprocess $X$ conditioned to die and characterise the prolific individuals
which are responsible for the infinite growth of the total mass. Finally,  Section 4 provides the backbone decomposition.

\section{The backbone decomposition without non-local branching}

The  so-called  backbone decompositions have been known in the earlier and more analytical setting of semigroup decompositions through the work of \cite{EvansOconnel} and \cite{EP} as well as in the pathwise setting in the work of \cite{SV1, SV2}.  The purpose of this section is to introduce the pathwise backbone decomposition for a supercritical 
superprocess without non-local branching given in \cite{am}, we hope this will make the rest of the paper easier to follow. 

%The basic idea behind the backbone decomposition is  that the process $X$ is equal in law to a supercritical branching Brownian motion with absorption a the origin, issued with a Poisson number of initial ancestors, such that the branches of its spatial tree are dressed in a Poissonian way using the law of the the original process conditioned to become extinguished.

To describe the backbone decomposition in detail, %for each $x\geq 0$, $(X, \mathbb{P}_{\mu})$  has the same law as the
consider the process $\{\Lambda^X_t : t \geq 0\}$ which has the following pathwise construction.
First sample from a branching particle diffusion with branching generator
\begin{equation}
F(r) = q\left(\sum_{n\geq 0} p_n r^n - r\right) =  \frac{1}{\lambda^*}\psi(\lambda^*(1-r)), \, r\in[0,1],
\label{F}
\end{equation}
and particle motion which is that of a Markov process with semigruop $\mathcal{P}$.
Note that in the above generator, we have that $q$ is the rate at which individuals reproduce and $\{p_n: n\geq 0\}$ is the offspring distribution. With the particular branching generator given by \eqref{F}, $q = \psi'(\lambda^*)$, $p_0 = p_1 =0$, and for $n\geq 2$,  $p_n : = p_n[0,\infty)$ where for $y\geq 0$, we defined the measure $p_n(\cdot)$ on $\{2,3,4,\ldots\}\times[0,\infty)$ by
\[
p_n({\rm d}y) =  \frac{1}{\lambda^* \psi'(\lambda^*)}\left\{\beta (\lambda^*)^2\delta_0({\rm d}y)\mathbf{1}_{\{n=2\}} + (\lambda^*)^n \frac{y^n}{n!} e^{-\lambda^*y} \Pi({\rm d}y)\right\}.
\]
If we denote the aforesaid branching particle diffusion by  $Z^X = \{Z^X_t: t\geq 0\}$ then we shall also insist that the configuration of particles in space at time zero, $Z_0$, is given by an independent    Poisson random measure with intensity $\lambda^*\mu$.
Next, %recall that $\mathcal{X} = D([0,\infty)\times \mathcal{M}_F[0,\infty))$ is the cadlag path space on which the laws $\mathbb{P}_\mu$ for $\mu\in\mathcal{M}_F[0,\infty)$ are defined.
{\it dress} the branches of the spatial tree that describes
the trajectory of $Z^X$ in such a way that a particle at the
space-time position $(\xi, t)\in[0,\infty)^2$ has an independent
$\mathcal{X}$-valued trajectory grafted on to it with rate
\[
2\beta {\rm d}\mathbb{N}_\xi^* + \int_0^\infty y e^{-\lambda^* y}\Pi({\rm d}y){\rm d}\mathbb{P}^{*}_{\xi\delta_y}.
%\label{dressing}
\]
The measures $\{\mathbb{N}_x, x\in \Rd\}$ are the so-called Dynkin-Kuznetsov measures  (see \cite{DK}) , which satisfy
\begin{equation}\label{NmeasureLaplace}
\mathbb{N}_x\left(1-e^{-\langle f,X_t\rangle}\right)=-\log\mathbb{E}_x\left(e^{-\langle f,X_t\rangle}\right),
\end{equation}
for all $f\in bp(\Rd)$ and $t\geq0$.  The measures $\{\mathbb{N}_x, x\in \Rd\}$
play the role of the L\'evy-measure (in the space of measure-valued cadlag paths $\mathcal{X}$) for the infinite divisible measure $\P_{\delta_x}$. The measure $\mathbb{N}_x^*$ denotes the Dynkin-Kuznetsov measure associated to the superprocess conditioned to die.
%Here the measure $\mathbb{N}_\xi^*$ is the excursion measure (cf. \cite{Dynkin-Kuznetsov, LG, El}) on the space $\mathcal{X}$  which satisfies
%\[
%\mathbb{N}_x^*(1- e^{-\l f,X_t\r}) = u^*_f(x,t),
%\]
%for $x,t\geq 0$ and $f\in C^+_b[0,\infty)$, where $ u^*_f(x,t)$ is the unique solution to (\ref{PDE}) with the branching mechanism $\psi$ replaced by $\psi^*$.
 Moreover, on the event that an individual in $Z^X$ dies and branches into $n\geq 2$ offspring at spatial position $\xi\in[0,\infty)$, with probability $p_n({\rm d}y)\mathbb{P}^{*}_{y\delta_\xi}$, an additional independent $\mathcal{X}$-valued trajectory is grafted on to the space-time branching point. The quantity $\Lambda^X_t$ is now understood to be the total dressed mass present at time $t$ together with the mass present at time $t$ of an independent copy of $(X,\mathbb{P}^{*}_{\mu})$ issued at time zero.
We denote the law of $(\Lambda^X, Z^X)$ by $\mathbf{P}_{\mu}$.

The backbone decomposition is now summarised by the following theorem lifted from Berestycki et al. \cite{am}.

\begin{theorem}\label{main-1}
For any $\mu\in\mathcal{M}_F(\mathbb{R}^d)$, the process $(\Lambda^X,  \mathbf{P}_\mu)$ is Markovian and has the same law as  $(X,  \mathbb{P}_\mu )$. Moreover, for each $t\geq 0$, the law of $Z^X_t$ given $\Lambda^X_t$ is that of a Poisson random measure with intensity measure $\lambda^*\Lambda^X_t$.
\end{theorem}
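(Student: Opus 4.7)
The plan is to verify the theorem by matching Laplace functionals and then invoking the Markov property that the construction inherits from the backbone process $Z^X$ and the ``extra'' copy $X^{*}$. Write $\Lambda^X_t = X^*_t + \Delta_t$, where $X^*$ is the independent copy of $X$ under $\mathbb{P}^{*}_{\mu}$ and $\Delta_t$ is the total mass deposited by the dressing along the backbone. By the branching property of $X$ under $\mathbb{P}^{*}$ and the independence of the dressings attached to distinct ancestors of $Z^X$, Campbell's formula applied to the initial Poisson random measure with intensity $\lambda^*\mu$ gives
\begin{equation*}
-\log \mathbf{E}_\mu\bigl[e^{-\langle f,\Lambda^X_t\rangle}\bigr] = \langle H(\cdot,t),\mu\rangle \quad\text{with}\quad H(x,t) := u^*_f(x,t) + \lambda^*\bigl(1-e^{-v(x,t)}\bigr),
\end{equation*}
where $u^*_f(x,t) := -\log \mathbb{E}^{*}_{\delta_x}[e^{-\langle f, X_t\rangle}]$ and $v(x,t)$ is the Laplace exponent of the dressing attached to a single ancestor at $x$. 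The goal is to prove $H = u_f$ and conclude the first assertion from uniqueness of non-negative solutions to \eqref{ed1} (with $\psi^{NL}\equiv 0$ in the present setting).

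To derive an integral equation for $v$, I would condition on the ancestor's trajectory $(\xi_s)_{s\leq t}$ together with its first branching time $\tau \sim \mathrm{Exp}(q)$, where $q=\psi'(\lambda^*)$ is read off from \eqref{F}. The dressing $\Delta_t$ then decomposes into: a Poisson random measure of continuous immigrations on $[0,t\wedge\tau]$ with intensity $2\beta\, ds\, d\mathbb{N}^{*}_{\xi_s}$; a Poisson random measure of jump immigrations with intensity $y e^{-\lambda^* y}\Pi^L(dy)\, ds\, d\mathbb{P}^{*}_{y\delta_{\xi_s}}$; and, on $\{\tau\leq t\}$, an extra immigration of $\mathbb{P}^{*}_{y\delta_{\xi_\tau}}$-type with $y$ sampled from $p_n(dy)$, followed by $n$ independent copies of the dressed backbone started from $\xi_\tau$. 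The Laplace formula for Poisson random measures, the identity \eqref{NmeasureLaplace} applied to the conditioned process, and the strong Markov property at $\tau$ then combine to give a Feynman--Kac-type integral equation for $v$.

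The decisive, and in my view hardest, step is algebraic. Forming the combination $H = u^*_f + \lambda^*(1-e^{-v})$ and plugging in the explicit expressions for $q$, $\{p_n\}$, and for $u^*_f$ coming from the Doob-type conditioning on extinction, the key identity $\psi(\lambda^*(1-r)) = \lambda^* F(r)$ from \eqref{F} should collapse the three immigration contributions together with the branching-point term into exactly $\psi^L(H(\xi_s,t-s))$, so that $H$ satisfies \eqref{ed1}. This yields the match of one-dimensional laws, and together with the Markov property built into the construction it upgrades to equality in law of $(\Lambda^X, \mathbf{P}_\mu)$ and $(X,\mathbb{P}_\mu)$. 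The Poissonian second assertion is then obtained by a variant of the same computation: repeat it with an extra weight $e^{-g(\xi_s)}$ attached to the backbone trajectories, which propagates only along $Z^X$; the same algebra expresses $\mathbf{E}_\mu\bigl[\exp\bigl(-\langle g, Z^X_t\rangle - \langle f,\Lambda^X_t\rangle\bigr)\bigr]$ as the expectation of $e^{-\langle f,\Lambda^X_t\rangle}$ multiplied by the conditional Laplace transform at $g$ of a Poisson random measure with intensity $\lambda^*\Lambda^X_t$. The main obstacle, I expect, is the bookkeeping behind that collapse: tracking how the exponential tilt $e^{-\lambda^* y}$ in the dressing intensities, the normalisation of the measures $p_n$, and the change of measure implicit in $\mathbb{P}^{*}$ align precisely so that $F$ returns to $\psi^L$ on the right-hand side of \eqref{ed1}.
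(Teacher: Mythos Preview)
The paper does not actually prove this theorem; it is quoted from \cite{am}. The natural comparison is therefore with the paper's proof of the non-local generalisation (the final theorem, via Theorem~\ref{keyTh} and the two lemmas preceding it), which follows \cite{am} closely. Your computational plan --- condition on the first branching time of the ancestor to derive an integral equation for $v$, form $H = u^*_f + \lambda^*(1-e^{-v})$, and show $H$ solves \eqref{ed1} by collapsing the immigration terms via the identity $\psi(\lambda^*(1-r)) = \lambda^* F(r)$ --- is exactly the engine used there.

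The gap is in the logical order, not the calculations. The Markov property of $\Lambda^X$ is \emph{not} ``built into the construction'': what the construction gives is that the \emph{pair} $(\Lambda^X, Z^X)$ is Markov. For $\Lambda^X$ alone to be Markov you must know that the conditional law of $Z^X_t$ given $\Lambda^X_t$ is a measurable function of $\Lambda^X_t$, and that is precisely the Poisson assertion. In \cite{am} and in this paper the Poisson statement is established \emph{first}, by computing the joint Laplace functional $\mathbf{E}_\mu[\exp(-\langle f,\Lambda^X_t\rangle - \langle h, Z^X_t\rangle)]$ and showing that $u^*_f + \lambda^*(1-e^{-v_{f,h}})$ and $u^*_{f+\lambda^*(1-e^{-h})} + \lambda^*(1-e^{-v_{f+\lambda^*(1-e^{-h}),0}})$ both solve \eqref{ed1} with initial datum $f+\lambda^*(1-e^{-h})$; the Markov property of $\Lambda^X$ is deduced from this, and the semigroup is then identified by taking $h=0$. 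Your ordering --- match one-dimensional laws, claim Markov for free, prove Poisson afterwards --- does not close, because matching one-dimensional marginals together with the Markov property of the pair $(\Lambda^X, Z^X)$ does not pin down the law of $\Lambda^X$ as a process on its own. Reordering so that the two-parameter computation (with $h$) comes first fixes the argument and makes it coincide with the paper's.
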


\section{The conditioned superprocess and prolific individuals}
\subsection{The conditioned superprocess}
We note that the total mass process, $\|X\|:=\{\|X_t\|\equiv\langle 1,X_t\rangle,t\geq0\}$, is a continuous state branching process with branching mechanism $\bar{\psi}$ given by
\begin{equation}\label{CSBP}
\bar{\psi}(\lambda):=(\alpha+1)\lambda+\beta\lambda^2+\int_0^{\infty}(e^{-\lambda u}-1+\lambda u)\Pi^{L}(du)-\gamma\lambda-
\int_0^{\infty}(1-e^{-\lambda u})\Pi^{NL}(du). 
\end{equation}
In order to avoid explosion of the total mass in finite time we assume that $\int_{0+}1/|\bar{\psi}(\xi)|d\xi=\infty$ (see \cite{Grey(1974)}). We will assume that the branching 
mechanism (\ref{CSBP}) is supercritical in the sense that  $0<-\bar{\psi}^\prime(0+)<\infty$, thus the mean-total mass grows exponentially at rate 
$-\bar{\psi}^\prime(0+)$. Under the above assumptions, and recalling the fact that $\bar{\psi}$
is strictly convex (\cite{Grey(1974)}), there exists  a unique $\lambda^*>0$ such that $\bar{\psi}(\lambda^*)=0$. Moreover, for all $\mu\in\mathcal{M}_C(\Rd)$,
\[\P_\mu(\lim_{t\uparrow\infty}\|X_t\|=0)=e^{-\|\mu\|\lambda^*}.\]
We also assume  the condition
\[\int^\infty \frac{1}{\overline{\psi}(\xi)}d\xi<\infty,\]
which ensures that the event $\{\lim_{t\uparrow\infty}\|X_t\|=0\}$ agrees with the event of extinction $\{\zeta<\infty\}$, with $\zeta=\inf\{t>0:\|X_t\|\}$ (e.g. see \cite{Grey(1974)} and \cite{bing}).
%exists and is unique in the account that the function $\bar{\psi}$ in convex and
%In the case that the above branching mechanism is supercritical it is possible that the superprocess becomes extinguished, i.e.
 %$\lim_{t\uparrow\infty}\|X_t\|=0$. The probability of the latter event is described in terms of the largest root, say $\lambda^{*}$, 
%of the equation $\bar{\psi}(\lambda)=0$ \textbf{ (Not sure about this part, but it does not matter for the rest if we take $\lambda^*$ as the 
%probability of extinction... creo que si... )}. We know that in this case there are two roots in $[0,\infty)$, one of them is $\lambda^*$ and the other is $0$. For 
%$\mu\in\mathcal{M}_{F}(\mathbb{R}^d)$ we have

We can express the probability of survival in terms of the so-called Dynkin-Kuznetsov measures $\{\mathbb{N}_x, x\in \Rd\}$  as follows.
% which satisfy
%\begin{equation}\label{NmeasureLaplace}
%\mathbb{N}_x\left(1-e^{-\langle f,X_t\rangle}\right)=-\log\mathbb{E}_x\left(e^{-\langle f,X_t\rangle}\right),
%\end{equation}
%for all $f\in bp(\Rd)$ and $t\geq0$.  The measures $\{\mathbb{N}_x, x\in \Rd\}$
%play the role of the L\'evy-measure (in the space of measure-valued cadlag paths $\mathcal{X}$) for the infinite divisible measure $\P_{\delta_x}$. 
 Set 
$\mathcal{E}:=\{\lim_{t\uparrow \infty}\|X_t\|=0\}$ then we have
\begin{equation}
\mathbb{P}_\mu(\mathcal{E})=e^{-\mathbb{N}_{\mu}(\mathbf{1}_S)}=e^{-\int_{\Rd}\mathbb{N}_x(\mathbf{1}_S)\mu(dx)}=e^{-\lambda^*\|\mu\|},\notag
\end{equation}
where $S$ denotes the event of survival.  Using the probability of extinction for the superprocess $X$ we can now prove the following 
\begin{lemma}\label{conditionedsuperprocess}
For each $\mu\in\mathcal{M}_C(\mathbb{R}^d)$, define the law of $X$ with initial configuration $\mu$ conditioned on becoming extinct by $\mathbb{P}_\mu^*$. Specifically, for all events $A$, measurable in the natural sigma algebra of $X$,
\begin{equation}
\mathbb{P}^*_{\mu}(A)=\mathbb{P}_{\mu}(A|\lim_{t\uparrow\infty}\|X_t\|=0).\notag
\end{equation}
Then, for all bounded $f$
\begin{equation}
-\log\mathbb{E}_{\mu}^*(e^{-\langle f,X_t\rangle})=\int_{D}u_f^*(x,t)\mu(dx),\notag
\end{equation}
with
\begin{equation}
u_f^*(x,t)=u_{f+\lambda^*}(x,t)-\lambda^*,\notag
\end{equation}
where $u_f^*(x,t)$ is the unique solution of the integral equation
\begin{equation}\label{ed3}
u^*_f(x,t)=\mathcal{P}_t(f)(x)-\int_0^t\mathcal{P}_s[\phi^{L,*}(u^*_f(\cdot,t-s))+\phi^{NL,*}(\cdot,u^*_f(\cdot,t-s))](x),
\end{equation}
where $\phi^{L,*}(\lambda)=\phi^L(\lambda+\lambda^*)$ for $\lambda\geq-\lambda^*$ and $\phi^{NL,*}(x,f)=\phi^{NL}(x,f+\lambda^*)$ for any positive measurable function $f$ such that $f+\lambda^*\in B^+(\mathbb{R}^d)$. That is to say $(X, \mathbb{P}_{\mu}^*)$ is a 
$(\mathcal{P},\phi^{L,*},\phi^{NL,*})$-superprocess.
\end{lemma}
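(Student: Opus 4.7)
The plan is to reduce everything to a direct manipulation of the mild equation (\ref{ed2}) for the unconditioned semigroup, using the explicit extinction probability $\mathbb{P}_\mu(\mathcal{E})=e^{-\lambda^*\|\mu\|}$ and the conservativity of $\mathcal{P}$.

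First I would compute the conditional Laplace functional by the Markov property. Since $\mathcal{E}=\{\lim_{s\to\infty}\|X_s\|=0\}$ is a tail event, for any $f\in bp(\Rd)$ and $t\geq0$,
\begin{equation*}
\mathbb{E}_\mu\!\left(e^{-\langle f,X_t\rangle}\mathbf{1}_\mathcal{E}\right)
=\mathbb{E}_\mu\!\left(e^{-\langle f,X_t\rangle}\mathbb{P}_{X_t}(\mathcal{E})\right)
=\mathbb{E}_\mu\!\left(e^{-\langle f,X_t\rangle}e^{-\lambda^*\langle 1,X_t\rangle}\right)
=\mathbb{E}_\mu\!\left(e^{-\langle f+\lambda^*,X_t\rangle}\right).
\end{equation*}
Dividing by $\mathbb{P}_\mu(\mathcal{E})=e^{-\lambda^*\|\mu\|}$ and applying Lemma~3.3 of \cite{dlg}, I obtain
\begin{equation*}
-\log\mathbb{E}_\mu^*\!\left(e^{-\langle f,X_t\rangle}\right)
=\int_{\Rd}\bigl(u_{f+\lambda^*}(x,t)-\lambda^*\bigr)\mu(dx),
\end{equation*}
which simultaneously gives the claimed integral representation and the identity $u_f^*(x,t)=u_{f+\lambda^*}(x,t)-\lambda^*$.

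Next I would derive the mild equation (\ref{ed3}). Writing (\ref{ed2}) with $f$ replaced by $f+\lambda^*$ and using that $\mathcal{P}$ is conservative, so $\mathcal{P}_t[\lambda^*]=\lambda^*$, yields
\begin{equation*}
u_{f+\lambda^*}(x,t)=\mathcal{P}_t[f](x)+\lambda^*-\int_0^t\mathcal{P}_s\!\left[\phi^L(u_{f+\lambda^*}(\cdot,t-s))+\phi^{NL}(\cdot,u_{f+\lambda^*}(\cdot,t-s))\right]\!(x)\,ds.
\end{equation*}
Subtracting $\lambda^*$ from both sides and substituting $u_{f+\lambda^*}=u_f^*+\lambda^*$ inside the nonlinearities, the definitions $\phi^{L,*}(\lambda)=\phi^L(\lambda+\lambda^*)$ and $\phi^{NL,*}(x,g)=\phi^{NL}(x,g+\lambda^*)$ convert the right-hand side exactly into the form (\ref{ed3}). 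Uniqueness of the solution follows from the same fixed-point argument that establishes uniqueness in (\ref{ed2}), applied to the shifted nonlinearity.

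The only subtlety is checking that the shifted mechanisms $\phi^{L,*}$ and $\phi^{NL,*}$ are well defined on the relevant range: since $u_{f+\lambda^*}\geq 0$, we have $u_f^*\geq -\lambda^*$, which is exactly the domain on which $\phi^{L,*}$ is specified; and $u_f^*+\lambda^*\geq0$ lies in $B^+(\Rd)$, so $\phi^{NL,*}$ makes sense there. I expect the main, if mild, obstacle to be keeping track of these domain conditions and verifying that the Markov property may indeed be applied to the tail event $\mathcal{E}$ via the monotone passage to the limit of $\{\|X_s\|\leq\varepsilon\}$-type events as $s\to\infty$; both are standard, but they are the only places where something beyond bookkeeping is required.
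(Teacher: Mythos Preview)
Your proposal is correct and follows essentially the same route as the paper: compute $\mathbb{E}_\mu^*(e^{-\langle f,X_t\rangle})$ via the Markov property and the identity $\mathbb{P}_{X_t}(\mathcal{E})=e^{-\lambda^*\|X_t\|}$, then shift the mild equation (\ref{ed2}) by $\lambda^*$ using conservativity of $\mathcal{P}$. Your version is in fact slightly more detailed than the paper's, which simply states that ``it is easy to check'' the shifted equation holds.
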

\pf 
Set $\mathcal{E}=\{\lim_{t\uparrow\infty}\|X_t\|=0\}$, then
\begin{align}
\mathbb{E}_{\mu}^*(e^{-\langle f,X_t\rangle})&=\mathbb{E}_{\mu}(e^{-\langle f,X_t\rangle}|\mathcal{E})\notag\\
&=e^{\lambda^*\|\mu\|}\mathbb{E}_{\mu}(e^{-\langle f,X_t\rangle}1_{\mathcal{E}})\notag\\
&=e^{\lambda^*\|\mu\|}\mathbb{E}_{\mu}(e^{-\langle f,X_t\rangle}\mathbb{E}_{X_t}(1_{\mathcal{E}}))\notag\\
&=e^{\lambda^*\|\mu\|}\mathbb{E}_{\mu}(e^{-\langle f+\lambda^*,X_t\rangle})\notag\\
&=e^{-\langle u_{f+\lambda^*}(\cdot,t)-\lambda^*,\mu\rangle}.\notag
\end{align}
Now,  using (\ref{ed2}) it is easy to check that $u_f^*(x,t)=u_{f+\lambda^*}(x,t)-\lambda^*$ is a solution to
\begin{equation}
u^*_f(x,t)=\mathcal{P}_t(f)(x)-\int_0^t\mathcal{P}_s[\phi^{L,*}(u^*_f(\cdot,t-s))+\phi^{NL,*}(\cdot,u^*_f(\cdot,t-s))](x),\notag
\end{equation}
where $\phi^{L,*}(\lambda)=\phi^{L}(\lambda+\lambda^*)$ and $\phi^{NL,*}(\cdot,f)=\phi^{NL}(\cdot,f+\lambda^*)$.
\qed
\subsection{Prolific individuals}
We will now identify the branching mechanism of the backbone for the superprocess $X$, i.e., we will give the generator of the continuous-time Galton Watson process related to the genealogies responsible for the infinite growth of the process, in the form
\begin{equation}
F(x,s)=q\left(\sum_{n\geq0}p^{L}_n+\sum_{n\geq0}p^{NL}_n\right)(s^n-s),\notag
\end{equation}
where $q>0$ is the common rate of splitting and $\{p_n^L:n\geq0\}$ is the offspring distribution related to local branching, i.e. $p_n^{L}$ is the probability of having $n$ offspring at the position in which the parent dies. 
Respectively, $p_n^{NL}$ is the probability of having $n$ offspring displaced from the position $x$ of the death of the parent  according to a random variable $\Theta$ such that $\Theta+x$ has distribution $\pi(x,\cdot)$.
\par Moreover the branching rate is given by $q\equiv(\phi^{L})^\prime(\lambda^*)$ (we leave it to the reader to verify that $q>0$), $p^{L}_0=p^L_1=0$, $n\geq2$,
\begin{equation}\label{pl1}
p^{L}_n=\frac{1}{\lambda^*q}\left\{\beta (\lambda^*)^2\mathbf{1}_{\{n=2\}}+\int_{(0,\infty)}\frac{(y\lambda^*)^n}{n!}e^{-\lambda^*y}\Pi^L(dy)\right\}.
\end{equation}
For the non-local offspring distribution we have that $p^{NL}_0=0$ and for $n\geq1$,
\begin{equation}\label{pl2}
p^{NL}_n=\frac{1}{\lambda^*q}\left\{\lambda^*\gamma\mathbf{1}_{\{n=1\}}+\int_{(0,\infty)}\frac{(y\lambda^*)^n}{n!}e^{-\lambda^*y}\Pi^{NL}(dy)\right\}.
\end{equation}
 We leave to the reader to verify that effectively $\sum_{n\geq1}(p_n^{L}+p^{NL}_n)=1$.  
On the other hand, to describe the law related to the discontinuous immigration along the backbone, once again we will deal with the  local and non-local immigration separately. For the local immigration we have that
\begin{equation}\label{il1}
\eta^{L}_n(dy)=\frac{1}{p^L_n\lambda^*q}\left\{\beta (\lambda^*)^2\delta_0(dy)\mathbf{1}_{\{n=2\}}+\frac{(y\lambda^*)^n}{n!}e^{-\lambda^*y}\Pi^{L}(dy)\right\},
\end{equation}
whereas for the non-local type of immigration we have
\begin{equation}\label{il2}
\eta^{NL}_n(dy)=\frac{1}{p_n^{NL}\lambda^*q}\left\{\lambda^*\gamma\delta_0{(dy)}\mathbf{1}_{\{n=1\}}+\frac{(y\lambda^*)^n}{n!}e^{-\lambda^*y}\Pi^{NL}(dy)\right\}.
\end{equation}

%Finally, to describe the non-local branching for the backbone we define
%\begin{align}
%F(x,d\nu)&=\int_{M_0(\mathbb{R}^d)}\left(\sum_{n=0}^{\infty}p_n^{NL}(l\sigma)^{*n}(d\nu)\right)G(x,d\sigma)\notag\\
%&=\int_{M_0(\mathbb{R}^d)}\left(\sum_{n=0}^{\infty}p_n^{NL}(l\sigma)^{*n}(d\nu)\right)\delta_{\pi(x)}(d\sigma)\notag\\
%&=\sum_{n=0}^{\infty}p_n^{NL}(l\pi)^{*n}(d\nu),\notag
%\end{align}
\section{Backbone decomposition}
\subsection{A branching particle system with four types of immigration}
Let $\mathcal{M}_a(\Rd)$ be the space of finite atomic measures on $\Rd$. Now suppose that $\xi=\{\xi_t:t\geq0\}$ is the stochastic process whose 
semi-group is given by $\mathcal{P}$. We shall use the expectation operators $\{E_x:x\in \Rd\}$ defined by $E_x(f(\xi_t))=\mathcal{P}_t[f](x)$.  
Let $Z=\{Z_t,t\geq0\}$ be a $\mathcal{M}_a(\Rd)$-valued process in which individuals, from the moment of birth, live for an independent and exponentially distributed 
time with parameter $q$ during which they execute a $\mathcal{P}$-diffusion issued from their position of birth and at death they give birth at the same 
position to an independent number of offspring locally with probabilities $\{p^L_n:n\geq2\}$, and non-locally with probabilities 
$\{p^{NL}_n(\cdot):n\geq1\}$.  Hence, $Z$  is a non-local branching particle system such that 
\begin{equation}
-\log E_{x}(e^{-\langle f,Z_t\rangle})=v_f(x,t),\notag
\end{equation}
where the semigroup $v_f$ satisfies the following integral equation
\begin{align}
e^{-v_f(x,t)}&=e^{-qt}\mathcal{P}_t\left[e^{-f}\right](x)\notag\\
&+\int_0^tds\,qe^{-qs}\mathcal{P}_s\left[\sum_{n=0}^{\infty}p_n^{L}e^{-nv_f(\cdot,t-s)}+\int_{\mathcal{M}_F(\mathbb{R}^d)}\sum_{n=0}^{\infty}e^{-\langle v_f(\cdot,t-s),\nu\rangle}p_n^{NL}(l\pi)^{*n}(d\nu)\notag
\right](x),
\end{align}
where $l\pi(d\nu)$ denotes the image of $\pi$ under the map $y\to\delta_y$ from $\mathbb{R}^d$ to $\mathcal{M}_F(\mathbb{R}^d)$ (the space of finite measures on $\mathbb{R}^d$) and $(l\pi)^{*n}$ denotes the $n$-fold convolution of $l\pi$.  
Thus, a parent particle at the position $x\in\Rd$ when branches it  gives birth to a random number of offspring in the following fashion:  
it produces  $n$ new individuals, which are initially located at $x$, 
with probability $p^L_N$; and produces $m$ new individuals, which choose their locations in $\mathbb{R}^d$ independently of each other according to the (non-random)
distribution $\pi(x,\cdot)$, with probability  $p_m^{NL}$.
 We shall  refer to $Z$ as the backbone with initial configuration denoted by
 $\nu\in\mathcal{M}_a(\Rd)$.  We will use the Ulam-Harris notation, i.e.,   that the individuals in $Z$  are uniquely identifiable amongst   $\mathcal{T}$,  
 the set labels of individuals realised in $Z$. For each individual $u\in \mathcal{T}$ we shall write $\tau_u$ and $\sigma_u$ for its birth and death times respectively, 
 $\{z_u(r): r\in[\tau_u, \sigma_u]\}$ for its spatial trajectory and $N_u$ for the number of offspring it has at time $\sigma_u$.% and $N_u$ for the number of its offspring at death. 

 With these elements at hands we are able to express the backbone decomposition of the superprocess $X$.  We are interested in immigrating 
$(\mathcal{P},\phi^{L,*},\psi^{NL,*})$-superprocesses along the backbone $Z$ in a way that the rate of 
immigration is related to the subordinator  (i.e. a L\'evy process with a.s. increasing paths), whose 
Laplace exponent is given by
\begin{align}\label{els}
\Phi(\lambda)&=( \phi^L)^\prime(\lambda+\lambda^*)-( \phi^L)^\prime(\lambda^*)\notag\\
&= 2\beta\lambda+\int_{(0,\infty)}(1-e^{-\lambda y})ye^{-\lambda^*y}\Pi^L(dy),
\end{align}
together with some additional immigration at the splitting times of $Z$.
\begin{defn}
For $\nu\in\mathcal{M}_a(\Rd)$ and $\mu\in\mathcal{M}_C(\Rd)$ let $Z$ be a $(\mathcal{P},F)$-branching diffusion with initial configuration 
$\nu$ and $\bar{X}$ and independent copy of $X$ under $\mathbb{P}_{\mu}^*$. Then we define the measure-valued stochastic process 
$\Delta=\{\Delta_t:t\geq0\}$ on $\Rd$ by
\begin{equation}
\Delta=\bar{X}+I^{\mathbb{N}^*}+I^{\mathbb{P}^*}+I^{\eta,L}+I^{\eta,NL},\notag
\end{equation}
where the processes $I^{\mathbb{N}^*}$, $I^{\mathbb{P}^*}$, $I^{\eta,L}$, and $I^{\eta,NL}$ are independent of $\bar{X}$ and, conditionally on $Z$, 
are independent of each other. More precisely, these processes are described as follows:
\begin{itemize}
\item[1]\textbf{Continuous immigration:} The process $I^{\mathbb{N}^*}$ is measure-valued on $\Rd$ such that
\begin{equation}
I_t^{\mathbb{N}^*}=\sum_{u\in\mathcal{T}}\sum_{t\wedge\tau_u<r\leq t\wedge\sigma_u}X_{t-r}^{(1,u,r)}\notag
\end{equation}
where, given $Z$, independently for each $u\in\mathcal{T}$ such that $\tau_u<t$, the processes $X_{\cdot}^{(1,u,r)}$ are countable in number and correspond to $\chi$-valued,
Poissonian immigration along the space-time trajectory $\{(z_u(r),r):r\in(\tau_u,t\wedge\sigma_u]\}$ with rate $2\beta dr\times d\mathbb{N}^*_{z_u(r)}$.
\item[2]\textbf{Discontinuous immigration:} The process $I^{\mathbb{P}^*}$ is measure-valued on $\Rd$ such that
\begin{equation}
I_t^{\mathbb{P}^*}=\sum_{u\in\mathcal{T}}\sum_{t\wedge\tau_u<r\leq t\wedge\sigma_u}X_{t-r}^{(2,u,r)}\notag
\end{equation}
where, given $Z$, independently for each $u\in\mathcal{T}$ such that $\tau_u<t$, the processes $X_{\cdot}^{(2,u,r)}$ are countable in number and 
correspond to $\chi$-valued, Poissonian immigration along the space-time trajectory $\{(z_u(r),r):r\in(\tau_u,t\wedge\sigma_u]\}$ with rate 
\[dr\times\int_{y\in(0,\infty)}ye^{-\lambda^*y}\Pi^L(dy)\times d\mathbb{P}^*_{y\delta_{z_u(r)}}.\]
\item[3]\textbf{Local Branch point biased immigration:} The process $I^{\eta,L}$ is measure-valued on $\Rd$ such that
\begin{equation}
I_t^{\eta,L}=\sum_{u\in\mathcal{T}}1_{\{\sigma_u\leq t\}}X_{t-\sigma_u}^{(3,u)}\notag
\end{equation}
where, given $Z$, independently for each $u\in\mathcal{T}$ such that $\sigma_u<t$, the processes $X_{\cdot}^{(3,u)}$ is an independent copy of $X$ issued at time $\sigma_u$ with law $\mathbb{P}_{Y_u\delta_{z_u(\sigma_u)}}$ where $Y_u$ is an independent random variable with distribution $\eta^L_{N_u}(dy)$.
\item[4]\textbf{Non-local Branch point biased immigration:} The process $I^{\eta,NL}$ is measure-valued on $\Rd$ such that
\begin{equation}
I_t^{\eta,NL}=\sum_{u\in\mathcal{T}}1_{\{\sigma_u\leq t\}}X_{t-\sigma_u}^{(3,u)}\notag
\end{equation}
where, given $Z$, independently for each $u\in\mathcal{T}$ such that $\sigma_u<t$, the processes $X_{\cdot}^{(3,u)}$ is an independent copy of $X$ issued at time $\sigma_u$ with law $\mathbb{P}_{Y_u\pi(z_u(\sigma_u),\cdot)}$ where $Y_u$ is an independent random variable with distribution $\eta^{NL}_{N_u}(dy)$. 
\end{itemize}
Moreover, we denote the law of $\Delta$ by $\mathbf{P}_{\mu\times\nu}$.
\end{defn}
We will now state our first theorem
\begin{theorem}\label{keyTh}
For every $\mu\in \mathcal{M}_C(\Rd)$, $\nu\in\mathcal{M}_a(\Rd)$ and $f,h\in bp(\Rd)$ we have that
\begin{equation}\label{f2}
\mathbf{E}_{\mu\times\nu}(e^{-\langle f,\Delta_t\rangle-\langle f,Z_t\rangle})=e^{-\langle u_f^*(\cdot,t),\mu\rangle-\langle v_{f,h}(\cdot,t),\nu\rangle},
\end{equation}
where $\exp\{-v_{f,h}(x,t)\}$ is the unique $[0,1]$-valued solution to the integral equation
\begin{align}\label{ed4}
e^{-v_{f,h}(x,t)}&=\mathcal{P}_t\left[e^{-h}\right](x)+\frac{1}{\lambda^*}\int_0^t\mathcal{P}_s[\phi^{L,*}(-\lambda^*e^{-v_{f,h}(\cdot,t-s)}+u_f^*(\cdot,t-s))\notag\\
&-\phi^{L,*}(u_f^*(\cdot,t-s))+\phi^{NL,*}(-\lambda^*e^{-v_{f,h}(\cdot,t-s)}+u_f^*(\cdot,t-s))\notag\\
&-\phi^{NL,*}(u_f^*(\cdot,t-s))](x),
\end{align}
for all $x\in\Rd$ and $t\geq0$.
\end{theorem}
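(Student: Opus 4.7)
The plan is first to peel off the independent contribution of $\bar X$, then reduce the remaining Laplace functional to a single-backbone-particle computation, and finally identify it via a first-branching decomposition combined with a Feynman--Kac manipulation. Since $\bar X$ is independent of $(Z, I^{\mathbb{N}^*}, I^{\mathbb{P}^*}, I^{\eta,L}, I^{\eta,NL})$ and distributed as $X$ under $\mathbb{P}_\mu^*$, Lemma \ref{conditionedsuperprocess} gives $\mathbf{E}_{\mu\times\nu}(e^{-\langle f, \bar X_t\rangle}) = e^{-\langle u_f^*(\cdot,t),\mu\rangle}$. Writing $I_t := I^{\mathbb{N}^*}_t + I^{\mathbb{P}^*}_t + I^{\eta,L}_t + I^{\eta,NL}_t$ and reading the second $f$ in the exponent of (\ref{f2}) as $h$ (to match the integral equation (\ref{ed4})), the branching property of $Z$ together with the additive Ulam--Harris construction of the immigration fields factorises the remaining Laplace functional over the atoms of $\nu$. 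Hence it suffices to show, for $\nu = \delta_x$, that $w(x,t) := \mathbf{E}_{\delta_x}(e^{-\langle f, I_t\rangle - \langle h, Z_t\rangle})$ equals $e^{-v_{f,h}(x,t)}$.

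I then condition on the life-time $\sigma \sim \mathrm{Exp}(q)$ and the $\mathcal{P}$-trajectory $\xi$ of the initial particle. On $\{\sigma > t\}$ only the continuous and discontinuous immigrations along $\{(\xi_r, r) : r \in [0,t]\}$ contribute and $Z_t = \delta_{\xi_t}$; the exponential formula for Poisson random measures, together with the Dynkin--Kuznetsov identity (\ref{NmeasureLaplace}) for $\mathbb{N}^*$ and the identity $\mathbb{E}^*_{y\delta_z}(e^{-\langle f, X_{t-r}\rangle}) = e^{-y u_f^*(z,t-r)}$ for the discontinuous piece, gives
\[
\mathbf{E}\bigl(e^{-\langle f, I_t\rangle - \langle h, Z_t\rangle}\mid \xi,\,\sigma > t\bigr) = e^{-h(\xi_t)}\exp\Bigl(-\int_0^t \Phi(u_f^*(\xi_r, t-r))\,dr\Bigr).
\]
On $\{\sigma = s \in (0,t]\}$ the same path factor accumulates over $[0,s]$, after which one multiplies by the branching factor
\[
B(y, t-s) = \sum_{n\ge 2} p_n^L\, w(y, t-s)^n \int e^{-y' u_f^*(y,t-s)}\eta_n^L(dy') + \sum_{m\ge 1} p_m^{NL}\, \pi(y, w(\cdot, t-s))^m \int e^{-y'\pi(y, u_f^*(\cdot,t-s))}\eta_m^{NL}(dy'),
\]
each summand capturing the $n$ (respectively $m$) independent descendants contributing $w(\cdot, t-s)$ at local (respectively $\pi$-chosen) positions together with the Laplace transform of the branch-point-biased immigration.

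The crux is an algebraic identity reorganising $B$ and the path integral into the right-hand side of (\ref{ed4}). Using (\ref{pl1})--(\ref{il2}), a Taylor-sum calculation yields
\[
\lambda^* q \sum_{n\ge 2} p_n^L\, w^n \int e^{-y' u_f^*}\eta_n^L(dy') = \beta(\lambda^* w)^2 + \int_0^\infty e^{-y(\lambda^* + u_f^*)}\bigl(e^{\lambda^* w y} - 1 - \lambda^* w y\bigr)\,\Pi^L(dy),
\]
and a parallel identity for the non-local sum, with $\pi(y, \cdot)$ replacing the point mass at $y$. Combining these with the subordinator rate $\Phi(u_f^*)$ from the Poissonian path-immigration (cf.\ (\ref{els})) and the constant rate $q$ coming from differentiating the holding-time density $qe^{-qs}$, direct algebra using $\phi^{L,*}(\lambda) = \phi^L(\lambda + \lambda^*)$ (and the corresponding definition of $\phi^{NL,*}$) reassembles the total local contribution into $\lambda^{*-1}[\phi^{L,*}(u_f^* - \lambda^* w) - \phi^{L,*}(u_f^*)]$ and the total non-local contribution into $\lambda^{*-1}[\phi^{NL,*}(\cdot, u_f^* - \lambda^* w) - \phi^{NL,*}(\cdot, u_f^*)]$. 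Rewriting the first-branching decomposition in mild form, via the standard Feynman--Kac identity for the Markov semigroup $\mathcal{P}$, then delivers (\ref{ed4}) for $w = e^{-v_{f,h}}$.

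Uniqueness of the $[0,1]$-valued solution of (\ref{ed4}) follows from a standard Gronwall argument using the local Lipschitz continuity of $\phi^{L,*}$ and $\phi^{NL,*}(x, \cdot)$ on the compact interval containing the values of $u_f^* - \lambda^* w$ (controlled by $\|f\|_\infty$). The main difficulty is the algebraic bookkeeping in the third step: three distinct sources of contributions, namely the subordinator rate from the Poissonian path-immigration, the rate $q$ from the exponential holding-time, and the offspring/branch-point-biased-immigration factor, must each be collected correctly, and their local ($\beta$, $\Pi^L$) and non-local ($\gamma$, $\Pi^{NL}$, $\pi$) components separately reorganised into the specific $\phi^{L,*}$ and $\phi^{NL,*}$ differences on the right-hand side of (\ref{ed4}).
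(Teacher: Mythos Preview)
Your proposal is correct and follows essentially the same mathematical route as the paper: peel off $\bar X$ via Lemma~\ref{conditionedsuperprocess}, reduce to a single initial backbone particle, condition on the first branching time, compute the Poissonian path-immigration via Campbell's formula and the Dynkin--Kuznetsov identity, evaluate the branch-point contribution through the Taylor-sum identities for $(p_n^L,\eta_n^L)$ and $(p_n^{NL},\eta_n^{NL})$, and finally pass from first-branching form to mild form via a Feynman--Kac manipulation.

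The one organisational difference worth noting is that the paper separates the argument into two intermediate lemmas: it first conditions on the \emph{entire} backbone $\{Z_s:s\le t\}$ to compute $\mathbf{E}[e^{-\langle f,I_t^{\mathbb N^*}+I_t^{\mathbb P^*}\rangle}\mid Z]=\exp\{-\int_0^t\langle\Phi(u_f^*(\cdot,t-s)),Z_s\rangle\,ds\}$, and then proves a general first-branching lemma for $\mathbf{E}[\exp\{-\int_0^t\langle g_{t-s},Z_s\rangle\,ds-\langle f,I_t^{\eta,L}+I_t^{\eta,NL}\rangle-\langle h,Z_t\rangle\}]$ with an \emph{arbitrary} path weight $g$, only substituting $g=\Phi(u_f^*)$ at the very end. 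Your all-at-once decomposition is more direct; the paper's modular version isolates a reusable lemma and makes the final algebraic identification (your ``crux'' step) slightly cleaner, since the $q$-term and the $\Phi$-term enter through separate channels rather than having to be disentangled from the same first-branching expression. For uniqueness the paper invokes an explicit Lipschitz bound (controlled by $\bar\psi'(0+)>-\infty$) together with a lemma of Evans--O'Connell rather than a bare Gronwall argument, but this is the same idea in substance.
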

In order to prove the Theorem \ref{keyTh} we will need to prove first some preliminary results.
\begin{lemma}
For all $f\in bp(\Rd)$, $\mu\in \mathcal{M}_C(\Rd)$, $\nu\in\mathcal{M}_a(\Rd)$ and $t\geq0$, we have
\begin{equation}
\mathbf{E}_{\mu\times\nu}(e^{-\langle f, I^{\mathbb{N}^*}_t+I^{\mathbb{P}^*}_t\rangle}|\{Z_s:s\leq t\})=\exp\left\{-\int_0^t\langle\Phi(u_f^*(\cdot,t-s)),Z_s\rangle ds\right\},\notag
\end{equation}
where $\Phi$ is given by (\ref{els}).
\end{lemma}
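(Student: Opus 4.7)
The plan is to compute the conditional Laplace transform particle by particle, exploiting that given $Z$ the two immigration processes $I^{\mathbb{N}^*}$ and $I^{\mathbb{P}^*}$ are independent, and each is built as an independent Poisson point process along the space-time trajectory of every $u\in\mathcal{T}$. Applying Campbell's formula for Poisson point processes, and then identifying the resulting integrands via the Dynkin-Kuznetsov relation $\mathbb{N}^*_x(1-e^{-\langle f, X_t\rangle}) = u_f^*(x,t)$ for the conditioned superprocess, will produce the Laplace exponent $\Phi$.

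First, fix $u\in\mathcal{T}$ with $\tau_u<t$. Given $Z$, the measure-valued paths $\{X_{\cdot}^{(1,u,r)}\}$ form a Poisson point process with intensity $2\beta\,dr\times d\mathbb{N}^*_{z_u(r)}$ over $r\in(\tau_u,t\wedge\sigma_u]$, so Campbell's formula gives
\begin{equation*}
\mathbf{E}\left(\exp\left\{-\sum_{t\wedge\tau_u<r\leq t\wedge\sigma_u}\langle f,X_{t-r}^{(1,u,r)}\rangle\right\}\,\bigg|\,Z\right)=\exp\left(-\int_{\tau_u}^{t\wedge\sigma_u}2\beta\,\mathbb{N}^*_{z_u(r)}(1-e^{-\langle f,X_{t-r}\rangle})\,dr\right).
\end{equation*}
Since $(X,\mathbb{P}^*)$ is itself a $(\mathcal{P},\phi^{L,*},\phi^{NL,*})$-superprocess by Lemma \ref{conditionedsuperprocess}, the analogue of (\ref{NmeasureLaplace}) identifies $\mathbb{N}^*_{z_u(r)}(1-e^{-\langle f,X_{t-r}\rangle})=u_f^*(z_u(r),t-r)$. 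An identical argument for $I^{\mathbb{P}^*}$, combined with $1-\mathbb{E}^*_{y\delta_x}(e^{-\langle f,X_{t-r}\rangle})=1-e^{-yu_f^*(x,t-r)}$, produces the factor
\begin{equation*}
\exp\left(-\int_{\tau_u}^{t\wedge\sigma_u}\int_0^{\infty}(1-e^{-yu_f^*(z_u(r),t-r)})\,ye^{-\lambda^*y}\Pi^L(dy)\,dr\right).
\end{equation*}

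Now I multiply these factors over $u\in\mathcal{T}$, using conditional independence both between $I^{\mathbb{N}^*}$ and $I^{\mathbb{P}^*}$ and across distinct particles. By the definition of $\Phi$ in (\ref{els}), the two exponents combine into $\Phi(u_f^*(z_u(r),t-r))$, giving
\begin{equation*}
\mathbf{E}(e^{-\langle f,I^{\mathbb{N}^*}_t+I^{\mathbb{P}^*}_t\rangle}\,|\,Z)=\exp\left(-\sum_{u\in\mathcal{T}}\int_{\tau_u}^{t\wedge\sigma_u}\Phi(u_f^*(z_u(r),t-r))\,dr\right).
\end{equation*}
Fubini-Tonelli exchanges the sum and integral, and the identity $\sum_{u}\mathbf{1}_{\{\tau_u<r\leq\sigma_u\}}\delta_{z_u(r)}=Z_r$ then rewrites the right-hand side as $\exp(-\int_0^t\langle\Phi(u_f^*(\cdot,t-r)),Z_r\rangle\,dr)$, which is the claim.

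The only non-routine point is the Dynkin-Kuznetsov identity for the conditioned superprocess, which is not stated explicitly in the excerpt; however, it follows at once from the general theory of $\mathbb{N}$-measures applied to the $(\mathcal{P},\phi^{L,*},\phi^{NL,*})$-superprocess supplied by Lemma \ref{conditionedsuperprocess}. Once this is acknowledged, all remaining steps -- Campbell's formula, conditional independence across particles, and the identification of $\sum_u \mathbf{1}_{\{\tau_u<r\leq\sigma_u\}}\delta_{z_u(r)}$ with $Z_r$ -- are routine.
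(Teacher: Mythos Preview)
Your proof is correct and follows essentially the same approach as the paper's own proof: both compute the conditional Laplace transform particle by particle via Campbell's formula, invoke the Dynkin--Kuznetsov identity $\mathbb{N}^*_x(1-e^{-\langle f,X_{t-r}\rangle})=u_f^*(x,t-r)$ for the conditioned superprocess, combine the two contributions into $\Phi$ using (\ref{els}), and then rewrite the sum over $u\in\mathcal{T}$ as $\int_0^t\langle\Phi(u_f^*(\cdot,t-r)),Z_r\rangle\,dr$. Your explicit remark that the Dynkin--Kuznetsov relation for $\mathbb{N}^*$ requires Lemma~\ref{conditionedsuperprocess} is a nice clarification that the paper leaves implicit.
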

\pf We write
\begin{equation}
\langle f, I^{\mathbb{N}^*}_t+I^{\mathbb{P}^*}_t\rangle=\sum_{u\in\mathcal{T}}\sum_{t\wedge\tau_u<r\leq t\wedge\sigma_u}\langle f,X_{t-r}^{(1,u,r)}\rangle+\sum_{u\in\mathcal{T}}\sum_{t\wedge\tau_u<r\leq t\wedge\sigma_u}\langle f,X_{t-r}^{(2,u,r)}\rangle.\notag
\end{equation}
Hence conditioning on $Z$, appealing to the independence of the immigration processes together with Campbell's formula (see e.g. Theorem 2.7 in \cite{K})
\begin{align}
\mathbf{E}_{\mu\times\nu}&(e^{-\langle f, I^{\mathbb{N}^*}_t\rangle}|\{Z_s:s\leq t\})=\exp\left\{-\sum_{u\in\mathcal{T}}2\int_{t\wedge\tau_u}^{t\wedge\sigma_u}\beta\cdot\mathbb{N}^*_{z_u(r)}(1-e^{-\langle f,X_{t-r}\rangle})dr\right\}\notag.
\end{align}
Now using that $\mathbb{N}^*_{z_u(r)}(1-e^{-\langle f,X_{t-r}\rangle})=u_f^*(z_u(r),t-r)$ (see (\ref{NmeasureLaplace})), we have
\begin{align}\label{i1}
\mathbf{E}_{\mu\times\nu}&(e^{-\langle f, I^{\mathbb{N}^*}_t\rangle}|\{Z_s:s\leq t\})
=\exp\left\{-\sum_{u\in\mathcal{T}}2\beta\int_{t\wedge\tau_u}^{t\wedge\sigma_u}u_f^*(z_u(r),t-r)dr\right\}.
\end{align}
On the other hand
\begin{align}\label{i2}
\mathbf{E}_{\mu\times\nu}&(e^{-\langle f, I^{\mathbb{P}^*}_t\rangle}|\{Z_s:s\leq t\})\notag\\
&=\exp\left\{-\sum_{u\in\mathcal{T}}\int_{t\wedge\tau_u}^{t\wedge\sigma_u}dr\int_0^\infty ye^{-\lambda^*} \Pi^L(dy)\mathbb{P}^*_{y\delta_{z_u(r)}}(1-e^{-\langle f,X_{t-r}\rangle})\right\}\notag\\
&=\exp\left\{-\sum_{u\in\mathcal{T}}\int_{t\wedge\tau_u}^{t\wedge\sigma_u}dr\int_0^\infty ye^{-\lambda^*} \Pi^L(dy)(1-e^{-yu_f^*(z_u(r),t-r)})\right\}.
\end{align}
Then, using (\ref{els}), (\ref{i1}) and (\ref{i2}) we get that
\begin{align}
\mathbf{E}_{\mu\times\nu}(e^{-\langle f, I^{\mathbb{N}^*}_t+I^{\mathbb{P}^*}_t\rangle}&|\{Z_s:s\leq t\})
=\exp\bigg\{-\sum_{u\in\mathcal{T}}\int_{t\wedge\tau_u}^{t\wedge\sigma_u}dr\bigg(2\beta u_f^*(z_u(r),t-r)\notag\\
&+\int_0^\infty ye^{-\lambda^*}\Pi^L(dy)(1-e^{-yu_f^*(z_u(r),t-r)})\bigg)\bigg\}\notag\\
&=\exp\left\{-\sum_{u\in\mathcal{T}}\int_{t\wedge\tau_u}^{t\wedge\sigma_u}dr(\Phi(u_f^*(z_u(r),t-r)))\right\}\notag\\
&=\exp\left\{-\int_0^t\langle \Phi(u_f^*(\cdot,t-r)),Z_r\rangle dr\right\}.\notag
\end{align}
\qed 

In the next lemma we shall use the notation \[\pi(\cdot,f(\circ,t))\equiv\int_{\Rd}f(y,t)\pi(\cdot,dy),\]
for a measurable function $f$.
\begin{lemma}
Suppose that $f,h\in bp(\Rd)$ and $g_s(x)$ is jointly measurable in $(s,x)$ and bounded on finite time horizonts of $s$. Then for 
all $x\in \Rd$ and $t\geq0$,
\begin{equation}
\mathbf{E}_{\mu\times\nu}\left(\exp\left\{-\int_0^t\langle g_{t-s},Z_s\rangle ds-\langle f, I_t^{\eta,L}\rangle-\langle f, I_t^{\eta,NL}\rangle-\langle h, Z_t\rangle\right\}\right)=e^{-\langle w(\cdot,t),\nu\rangle},\notag
\end{equation}
where $\exp\{-w(x,t)\}$ is the unique $[0,1]$-valued solution to the integral equation

\begin{align}\label{ed5}
e^{-w(x,t)}&=\mathcal{P}_t[e^{-h}](x)+\frac{1}{\lambda^*}\int_0^t\mathcal{P}_s[H_{t-s}(\cdot,-\lambda^*e^{-w(\cdot,t-s)})-\lambda^*e^{-w(\cdot,t-s)}g_{t-s}(\cdot)](x)ds.
\end{align}
where

\begin{align}
H_{t-s}(\cdot,-\lambda^*e^{-w(\cdot,t-s)})&=-\lambda^*qe^{-w(\cdot,t-s)}+\beta(\lambda^*)^2e^{-2w(\cdot,t-s)}+\gamma\lambda^*\pi(\cdot,e^{-w(\circ,t-s)})\notag\\
&+\int_{(0,\infty)}(e^{\lambda^* ye^{-w(\cdot,t-s)}}-1-\lambda^* ye^{-w(\cdot,t-s)})e^{-(\lambda^*+u_f^*(\cdot,t-s))y}\Pi^L(y)\notag\\
&+\int_{(0,\infty)}(e^{\lambda^*\pi(\cdot,e^{-w(\circ,t-s)}) y}-1)e^{-\pi(\cdot,\lambda^*+u_f^*(\circ,t-s))}\Pi^{NL}(dy).\notag
\end{align}

\end{lemma}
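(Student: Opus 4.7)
The plan is to exploit the branching property of $Z$ to reduce the statement to the case $\nu=\delta_{x}$: if $W(x,t)$ denotes the LHS when $\nu=\delta_{x}$, then for general $\nu=\sum_{i}\delta_{x_{i}}\in\mathcal{M}_{a}(\Rd)$ the LHS factorises as $\prod_{i}W(x_{i},t)$, so it is enough to exhibit a function $w$ with $W(x,t)=e^{-w(x,t)}$ satisfying (\ref{ed5}). Uniqueness of a $[0,1]$-valued bounded solution of (\ref{ed5}) on compact intervals then follows by a standard Gr\"onwall estimate, since the RHS of (\ref{ed5}) is Lipschitz in $e^{-w}$ with constant controlled by $\beta$, $\gamma$, $\int(y\wedge y^{2})\Pi^{L}(dy)$, $\int y\Pi^{NL}(dy)$ and the finite-horizon sup-norm of $g$.

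The heart of the argument is a first-jump decomposition of $Z$. For $\nu=\delta_{x}$, condition on the lifetime $\sigma\sim\mathrm{Exp}(q)$ of the root and on its $\mathcal{P}$-trajectory $(\xi_{r})_{r\leq t\wedge\sigma}$. On $\{\sigma>t\}$ only the root contributes, yielding $e^{-qt}E_{x}[\exp\{-A_{t}-h(\xi_{t})\}]$ with $A_{s}:=\int_{0}^{s}g_{t-r}(\xi_{r})\,dr$. On $\{\sigma=s\in(0,t]\}$, after the exponential density $qe^{-qs}ds$ and the factor $e^{-A_{s}}$, the branching property of $Z$ supplies the descendants' contribution as $e^{-nw(\xi_{s},t-s)}$ (local, $n\geq2$) or $\pi(\xi_{s},e^{-w(\cdot,t-s)})^{n}$ (non-local, $n\geq1$), while Lemma~\ref{conditionedsuperprocess} gives the Laplace functional of the branch-point immigration $X^{(3,\mathrm{root})}$ as $e^{-yu_{f}^{*}(\xi_{s},t-s)}$ or $e^{-y\pi(\xi_{s},u_{f}^{*}(\cdot,t-s))}$, averaged against $\eta^{L}_{n}(dy)$ or $\eta^{NL}_{n}(dy)$. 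Substituting (\ref{pl1})--(\ref{il2}) cancels the $p_{n}$ normalisers in $p_{n}^{L/NL}\eta_{n}^{L/NL}$, and summing over $n$ recognises Poissonian series: the local terms assemble into $\beta(\lambda^{*})^{2}e^{-2w}+\int(e^{\lambda^{*}ye^{-w}}-1-\lambda^{*}ye^{-w})e^{-(\lambda^{*}+u_{f}^{*})y}\Pi^{L}(dy)$, the non-local into $\gamma\lambda^{*}\pi(e^{-w})+\int(e^{\lambda^{*}\pi(e^{-w})y}-1)e^{-(\lambda^{*}+\pi(u_{f}^{*}))y}\Pi^{NL}(dy)$. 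A short inspection identifies $q$ times the aggregate post-jump factor with $\tfrac{1}{\lambda^{*}}H_{t-s}(\xi_{s},-\lambda^{*}e^{-w(\xi_{s},t-s)})+qe^{-w(\xi_{s},t-s)}$, so one is left with the renewal equation
\begin{equation*}
e^{-w(x,t)}=e^{-qt}E_{x}\!\left[e^{-A_{t}-h(\xi_{t})}\right]+\int_{0}^{t}e^{-qs}E_{x}\!\left[e^{-A_{s}}\!\left(\tfrac{1}{\lambda^{*}}H_{t-s}(\xi_{s},\cdot)+qe^{-w(\xi_{s},t-s)}\right)\right]ds.
\end{equation*}

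To convert the renewal identity into the mild form (\ref{ed5}), apply a time-inhomogeneous Feynman--Kac/Duhamel argument with potential $V_{r}(\cdot)=q+g_{t-r}(\cdot)$: for any bounded measurable candidate $U$ satisfying (\ref{ed5}), Dynkin's formula applied to $r\mapsto e^{-\int_{0}^{r}V_{u}(\xi_{u})du}U(\xi_{r},t-r)$ and integrated over $r\in[0,t]$ reproduces exactly the renewal equation above with $e^{-w}$ replaced by $U$, the $qU$ produced by $\tfrac{1}{\lambda^{*}}H_{t-r}$ being cancelled against the $-qU$ arising from $V_{r}U$. Uniqueness of the bounded solution of the renewal equation (again via Gr\"onwall) then forces $U=W$. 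The main obstacle of the whole argument is Step~2: carefully matching the double Poisson expansion (in the offspring count $n$ and in the jump size $y$) to the five explicit terms of $H_{t-s}$, most delicately in the non-local case, where the displacement kernel $\pi$ mixes the spatial dependences of $e^{-w(\cdot,t-s)}$ and $u_{f}^{*}(\cdot,t-s)$ inside the exponent.
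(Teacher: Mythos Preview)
Your proof is correct and follows essentially the same route as the paper's: a first-branching decomposition of $Z$, insertion of the explicit formulas (\ref{pl1})--(\ref{il2}) to collapse the offspring/immigration sums into $H_{t-s}$, a Feynman--Kac/Duhamel passage between the renewal and mild forms, and a Lipschitz/Gr\"onwall argument for uniqueness. The only differences are organisational---the paper first reduces to time-independent $g$ (citing \cite{EvansOconnel}) and runs the Feynman--Kac step in the direction renewal $\Rightarrow$ mild, whereas you run it mild $\Rightarrow$ renewal; since your stated argument only shows that any solution of (\ref{ed5}) coincides with $W$, you should make explicit the (equally standard) converse implication to conclude that $W$ itself solves (\ref{ed5}).
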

\pf 
Following the proof of Theorem 2.2 in \cite{EvansOconnel} it is enough to prove the result for $g$ being time-independent. Recall that 
$\xi=\{\xi_t:t\geq0\}$ is the stochastic process whose semi-group is given by $\mathcal{P}$. Let us define a new semigroup
\begin{equation}
\mathcal{P}^{g}_t[f](x)=E_x\left(e^{-\int_0^tg(\xi_s)}f(\xi_s)\right),\notag
\end{equation}
for $f,g\in bp(\Rd)$. Standard Feynman-Kac manipulations (cf. Lemma 2.3 in \cite{EvansOconnel}) give us that
\begin{equation}\label{fk}
\mathcal{P}^{g}_t[f](x)=\mathcal{P}_t[f](x)-\int_0^tds\mathcal{P}_s[g(\cdot)\mathcal{P}^{g}_{t-s}[f](\cdot)](x).
\end{equation}
Conditioning on the first branching time, and recalling that the branching occurs at rate $q=({\phi^L})^\prime(\lambda^*)$ we get that

\begin{align}\label{i3}
e^{-w(x,t)}=e^{-qt}\mathcal{P}^{g}_t[e^{-h}](x)+&\int_0^tds\cdot qe^{-qs}\mathcal{P}^{g}_s\bigg[\sum_{n\geq2}p^{L}_ne^{- nw(\cdot,t-s)}\int_{(0,\infty)}\eta^{L}_n(dy)e^{-yu_f^*(\cdot,t-s)}\notag\\
&+\int_{\mathcal{M}_C(\mathbb{R}^d)}\sum_{n\geq1}e^{-\langle w(\cdot,t-s),\nu\rangle}p_n^{NL}(l\pi)^{*n}(d\nu)\int_0^{\infty}\eta^{NL}_n(dy)e^{-y\pi(\cdot, u^*_f(\circ,t-s))}\bigg](x).
\end{align}

Using (\ref{pl2}), (\ref{il2}) and performing similar computations  to the ones in \cite{dlg} (cf. Section 3) we have that

\begin{align}\label{cg1}
\sum_{n\geq1}&p_n^{NL}(l\pi)^{*n}(d\nu)\int_0^{\infty}\eta^{NL}_n(dy)e^{-y\pi(\cdot, u^*_f(\circ,t-s))}\notag\\
&=\sum_{n\geq1}\int_0^{\infty}\pi(\cdot,e^{-w(\circ,t-s)})^n\frac{1}{\lambda^*q}\left\{\gamma\lambda^*\delta_0(dy)1_{\{n=1\}}+\frac{(y\lambda^*)^n}{n!}e^{-\lambda^*y}\Pi^{NL}(dy)\right\}e^{-y\pi(\cdot, u_f^*(\circ,t-s))}\notag\\
&=\frac{1}{\lambda^*q}\left\{\int_0^{\infty}(e^{\lambda^*y \pi(\cdot,e^{-w(\circ,t-s)})}-1)e^{-\pi(\cdot,\lambda^*+u_f^*(\circ,t-s))}\Pi^{NL}(dy)+\gamma\lambda^*\pi(\cdot,e^{-w(\circ,t-s)})\right\}.
\end{align}

Now for the local branching term we obtain, by proceeding as in the proof of Lemma 4 in \cite{am} and using (\ref{pl1}) and (\ref{il1}), the following

\begin{align}\label{cg2}
\sum_{n\geq2}&p^{L}_ne^{- nw(\cdot,t-s)}\int_{(0,\infty)}\eta^{L}_n(dy)e^{-yu_f^*(\cdot,t-s)}\notag\\
&=\frac{1}{\lambda^*q}\left\{\beta(\lambda^*)^2e^{-2w(\cdot,t-s)}+\int_0^{\infty}(e^{\lambda^*ye^{-w(\cdot,t-s)}}-1-\lambda^*ye^{-w(\cdot,t-s)})e^{-y(\lambda^*+u_f^*(\cdot,t-s))}\Pi^L(dy)\right\}.
\end{align}

Using (\ref{cg1}) and (\ref{cg2}) in (\ref{i3}) we have that
\begin{align}\label{fk3}
&e^{-w(x,t)}\notag\\
&=e^{-qt}\mathcal{P}^{g}_t[e^{-h}](x)+\int_0^tds\cdot e^{-qs}\mathcal{P}^{g}_s\left[\frac{1}{\lambda^*}(H_{t-s}(\cdot,-\lambda^*e^{-w(\cdot,t-s)})+\lambda^*qe^{-w(\cdot,t-s)})\right](x)\notag\\
&=\mathcal{P}^{g}_t[e^{-h}](x)+\int_0^tds\mathcal{P}^{g}_s\left[\frac{1}{\lambda^*}H_{t-s}(\cdot,-\lambda^*e^{-w(\cdot,t-s)})\right](x).
\end{align}
where the second inequality follows from a standard technique found for example in Lemma 4.1.1 of \cite{dy}. Now making the same computations as in \cite{am} we obtain that
\begin{align}\label{fk2}
\int_0^tds&\mathcal{P}_s\left[g(\cdot)\mathcal{P}_{t-s}^g[e^{-h}](\cdot)\right](x)\notag\\
&+\frac{1}{\lambda^*}\int_0^tds\int_0^sdr\mathcal{P}_r\left[g(\cdot)\mathcal{P}_{s-r}^g[H_{t-s}(\cdot,-\lambda^*e^{-\omega(\cdot,t-s)})]\right](x)\notag\\
&=\int_0^tds\mathcal{P}_s\left[g(\cdot)\mathcal{P}_{t-s}^g[e^{-h}](\cdot)\right](x)\notag\\
&+\frac{1}{\lambda^*}\int_0^tdr\mathcal{P}_r\left[g(\cdot)\int_r^tds\mathcal{P}_{s-r}^g[H_{t-s}(\cdot,-\lambda^*e^{-\omega(\cdot,t-s)})](\cdot)\right](x)\notag\\
&=\int_0^tds\mathcal{P}_s\left[g(\cdot)\mathcal{P}_{t-s}^g[e^{-h}](\cdot)\right](x)\notag
\end{align}
\begin{align}
&+\frac{1}{\lambda^*}\int_0^tdr\mathcal{P}_r\left[g(\cdot)\int_0^{t-r}d\theta\mathcal{P}_{\theta}^g[H_{t-\theta-r}(\cdot,-\lambda^*e^{-\omega(\cdot,t-s)})](\cdot)\right](x)\notag\\
&=\int_0^tdr\mathcal{P}_r\bigg[g(\cdot)\bigg\{\mathcal{P}_{t-r}^g[e^{-h}](\cdot)\notag\\
&+\frac{1}{\lambda^*}\int_0^{t-r}d\theta\mathcal{P}_{\theta}^g[H_{t-r-\theta}(\cdot,-\lambda^*e^{-\omega(\cdot,t-s)})](\cdot)\bigg\}\bigg](x)\notag\\
&=\int_0^tds\mathcal{P}_s\left[g(\cdot)e^{-\omega(\cdot,t-s)}\right](x).
\end{align}
Next, we use (\ref{fk}) and (\ref{fk2}) in (\ref{fk3}) to obtain that
\begin{align}
e^{-w(x,t)}&=\mathcal{P}_t[e^{-h}](x)-\int_0^tds\mathcal{P}_s[g(\cdot)\mathcal{P}^g_{t-s}[e^{-h}](\cdot)](x)\notag\\
&+\frac{1}{\lambda^*}\int_0^tds\bigg\{\mathcal{P}_s[H_{t-s}(\cdot,-\lambda^*e^{-w(\cdot,t-s)})](x)\notag\\
&-\int_0^sdr\mathcal{P}_r[g(\cdot)\mathcal{P}_{s-r}^g[H_{t-s}(\cdot,-\lambda^*e^{-w(\cdot,t-s)})]](x)\bigg\}\notag\\
&=\mathcal{P}_t[e^{-h}](x)+\frac{1}{\lambda^*}\int_0^tds\mathcal{P}_s\left[H_{t-s}(\cdot,-\lambda^*e^{-w(\cdot,t-s)})-\lambda^*g(\cdot)e^{-w(\cdot,t-s)}\right](x).\notag
\end{align}
\par The proof is complete as soon as we can establish uniqueness to (\ref{ed5}). The proof is guided by the same arguments as in the proof of Lemma 4 in \cite{am}, i.e., it suffices to check that for each fixed $T>0$, there exists $K>0$ such that
\begin{equation}
\sup_{s\leq T}\sup_{y\in\mathbb{R}^d}|H_s(y,-u(y))-H_s(y,-v(y))|\leq K\sup_{y\in\mathbb{R}^d}|u(y)-v(y)|,\notag
\end{equation}
where $u$ and $v$ are any two measurable mappings from $\mathbb{R}^d$ to $[0,\lambda^*]$, then Lemma 2.1 in \cite{EvansOconnel} gives the result.
To this end we define for $\lambda\geq-\lambda^*$ and $u\geq0$,
\begin{align}
\chi_u^1(\lambda)=\lambda q+\beta(\lambda)^2+\int_{(0,\infty)}(e^{-\lambda y}-1+\lambda y)e^{-(\lambda^*+u)y}\Pi^L(y),\notag
\end{align}
and for any positive measurable function such that $f+\lambda^*\in B(\mathbb{R}^d)$, and $v\geq0$
\begin{align}
\chi_u^2(\lambda)=\gamma\lambda+\int_{(0,\infty)}(e^{-\lambda z}-1)e^{-(\lambda^*+u) z}\Pi^{NL}(dz).\notag
\end{align}
Therefore by definition we have that $H_s(y,-v(y))=\chi^1_{u_f^*(y,t-s)}(-v(y))+\chi^2_{\pi(y,u_f^*(\circ,t-s))}(-\pi(y,v(\circ)))$, for any measurable mapping $v$ from $\mathbb{R}^d$ to $[0,\lambda^*]$.
\par With the help of Lemma 5 in \cite{am} and the fact that $\pi(x,\cdot)$ is a probability measure for every $x\in\mathbb{R}^d$, we can see that for fixed $T>0$,
\begin{align}
\sup_{s\leq T}\sup_{y\in\mathbb{R}^d}|H_s(y,-u(y))&-H_s(y,-v(y))|\leq\sup_{s\leq T}\sup_{y\in\mathbb{R}^d}|\chi^1_{u_f^*(y,t-s)}(-u(y))-\chi^1_{u_f^*(y,t-s)}(-v(y))|\notag\\
&+\sup_{s\leq T}\sup_{y\in\mathbb{R}^d}|\chi^2_{\pi(y,u_f^*(\circ,t-s))}(-\pi(y,-u(\circ)))-\chi^2_{\pi(y,u_f^*(\circ,t-s))}(-\pi(y,-v(\circ)))|\notag\\
&\leq\sup_{0\leq u^*\leq \bar{u}_T}\sup_{y\in\mathbb{R}^d}|\chi^1_{u^*}(-u(y))-\chi^1_{u^*}(-v(y))|\notag\\
&+\sup_{0\leq u^*\leq \bar{u}_T}\sup_{y\in\mathbb{R}^d}|\chi^2_{u^*}(-\pi(y,-u(\circ)))-\chi^2_{u^*}(-\pi(y,-v(\circ)))|\notag\\
&\leq K\sup_{y\in\mathbb{R}^d}|u(-y)-v(-y)|,\notag
\end{align}
where $u$ and $v$ are any two measurable mappings from $\mathbb{R}^d$ to $[0,\lambda^*]$,
\begin{equation}\label{cota}
K=\sup_{0\leq u^*\leq \bar{u}_T}\sup_{y\in\mathbb{R}^d}(|(\chi^1_{u^*})'(-\lambda)|+|(\chi^2_{u^*})'(-\lambda)|)<\infty,
\end{equation}
(observe that using Lemma 5 in \cite{am} we have that (\ref{cota}) is true if and only if $\bar{\psi}'(0+)>-\infty$) and
\begin{equation}
\bar{u}_T=\sup_{s\leq T}\sup_{y\in\mathbb{R}^d}u_f^*(y,s)<\infty.\notag
\end{equation}
Following the same steps as in the proof of Lemma 4 in \cite{am} the finitenes of $\bar{u}_T$ can be deduced from the fact that if we assume, without loss of generality, that $f$ is bounded by $\theta\geq0$, then 
\begin{equation}
u_f^*(y,s)\leq U_{\theta}^*(s),\qquad\text{for all $y\in\mathbb{R}^d$ and $s\geq0$},\notag
\end{equation}
where $U_{\theta}^*(s)$ is the unique solution to
\begin{equation}
U_{\theta}^*(s)+\int_0^s\bar{\psi}^*(U_{\theta}^*(u))du=\theta,\notag
\end{equation}
with
\begin{equation}
\bar{\psi}^*(\lambda)=\bar{\psi}(\lambda+\lambda^*),\qquad\text{for all $\lambda\geq-\lambda^*$}.\notag
\end{equation}
This implies that $\bar{u}_T\leq \sup_{s\leq T}U_{\theta}^*(s)<\infty$ , and thus the proof is complete.
\qed

\noindent{\bf Proof of Theorem 3.} It just suffices to prove, thanks to Lemma 2, that
\begin{equation}
\mathbf{E}_{\mu\times\nu}(e^{\langle f,I_t\rangle-\langle h,Z_t\rangle})=e^{-\langle v_{f,h}(\cdot,t),\nu\rangle}\notag
\end{equation}
where $I:=I^{\mathbb{N}^*}+I^{\mathbb{P}^*}+I^{\eta,L}+I^{\eta,NL}$, and $v_{f,h}$ solves (\ref{ed4}). Putting Lemma 5 and Lemma 6  together it suffices to show that when $g_{t-s}(\cdot)=\Phi(u_f^*(\cdot,t-s))$ (where $\Phi$ is given in (\ref{els})) we have that $\exp\{-w(x,t)\}$ is the solution to (\ref{ed4}). So making the computations as in \cite{am} it is easy to see that
\begin{align}
H_{t-s}&(\cdot,-\lambda^*e^{-w(\cdot,t-s)})-\lambda^*\Phi(u_f^*(\cdot,t-s))e^{-w(\cdot,t-s)}\notag\\
&=-\lambda^*qe^{-w(\cdot,t-s)}+\gamma\lambda^*\pi(\cdot,e^{-w(\circ,t-s)})+\beta(\lambda^*)^2e^{-2w(\cdot,t-s)}+\lambda^*qe^{-w(\cdot,t-s)}\notag\\
&-\lambda^*e^{-w(\cdot,t-s)}\left((\alpha+1)+2\beta(u_f^*(\cdot,t-s)+\lambda^*)-\int_0^{\infty}(xe^{-x(u_f^*(\cdot,t-s)+\lambda^*)}- x)\Pi^L(dx)\right)\notag\\
&+\int_0^{\infty}(e^{\lambda^*ye^{-w(\cdot,t-s)}}-1-\lambda^*ye^{-w(\cdot,t-s)})e^{-(\lambda^*+u_f^*(\cdot,t-s))y}\Pi^{L}(dy)\notag\\
&+\int_0^{\infty}(e^{\lambda^*y\pi(\cdot,e^{-w(\circ,t-s)})}-1)e^{-y\pi(\cdot,\lambda^*+u_f^*(\circ,t-s))}\Pi^{NL}(dy)\notag\\
&=\phi^{L,*}(-\lambda^*e^{-w(\cdot,t-s)}+u_f^*(\cdot,t-s))+\phi^{NL,*}(\cdot,-\lambda^*e^{-w(\cdot,t-s)}+u_f^*(\cdot,t-s))\notag\\
&-(\phi^{L,*}(u_f^*(\cdot,t-s))+\phi^{NL,*}(\cdot,u_f^*(\cdot,t-s))).\notag
\end{align}

\subsection{Backbone decomposition}
Finally with all those elements we are able to prove the following theorem which is the main result of this work. We will deal with the case when we randomize the law $\mathbf{P}_{\mu\times\nu}$ for $\mu\in\mathcal{M}_C(\mathbb{R}^d)$ by replacing the deterministic 
measure $\nu$ with a Poisson random measure having intensity measure $\lambda^*\mu$. We denote the resulting law by $\mathbf{P}_{\mu}$.
\begin{theorem}
For any $\mu\in\mathcal{M}_C(\Rd)$, the process $(\Delta,\mathbf{P}_{\mu})$ is Markovian and has the same law as $(X,\mathbb{P}_{\mu})$.
\end{theorem}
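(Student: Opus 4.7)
The plan is to reduce the theorem to a single analytic identity, namely
\begin{equation*}
u_{f+\lambda^*(1-e^{-h})}(x,t)=u_f^*(x,t)+\lambda^*\bigl(1-e^{-v_{f,h}(x,t)}\bigr),\qquad x\in\mathbb{R}^d,\ t\geq 0,
\end{equation*}
where $u_f,u_f^*,v_{f,h}$ solve (\ref{ed2}), (\ref{ed3}) and (\ref{ed4}) respectively. Once this identity is established, Theorem \ref{keyTh} combined with the Campbell formula for Poisson point measures (applied when integrating out $\nu$, a Poisson random measure with intensity $\lambda^*\mu$) immediately yields
\begin{equation*}
\mathbf{E}_\mu\bigl(e^{-\langle f,\Delta_t\rangle-\langle h,Z_t\rangle}\bigr)=\exp\bigl\{-\langle u_f^*(\cdot,t),\mu\rangle-\lambda^*\langle 1-e^{-v_{f,h}(\cdot,t)},\mu\rangle\bigr\}=\exp\bigl\{-\langle u_{f+\lambda^*(1-e^{-h})}(\cdot,t),\mu\rangle\bigr\},
\end{equation*}
which, upon setting $h=0$, already matches the Laplace functional of $X_t$ under $\mathbb{P}_\mu$.

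To prove the identity, I would set $U_h(x,t):=u_f^*(x,t)+\lambda^*(1-e^{-v_{f,h}(x,t)})$ and verify directly that $U_h$ solves (\ref{ed2}) with initial data $f+\lambda^*(1-e^{-h})$; uniqueness for (\ref{ed2}), which can be proved by the Lipschitz estimate already employed to prove uniqueness for (\ref{ed5}), then forces $U_h=u_{f+\lambda^*(1-e^{-h})}$. The key algebra runs as follows: adding (\ref{ed3}) and $\lambda^*$ times $(1-$ (\ref{ed4}) $)$, the two copies of $\phi^{L,*}(u_f^*)$ and of $\phi^{NL,*}(\cdot,u_f^*)$ cancel and leave an integrand of the form $\phi^{L,*}(-\lambda^*e^{-v_{f,h}}+u_f^*)+\phi^{NL,*}(\cdot,-\lambda^*e^{-v_{f,h}}+u_f^*)$. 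The substitution $-\lambda^*e^{-v_{f,h}}+u_f^*=U_h-\lambda^*$ together with the identities $\phi^{L,*}(\cdot)=\phi^L(\cdot+\lambda^*)$ and $\phi^{NL,*}(\cdot,g)=\phi^{NL}(\cdot,g+\lambda^*)$ turns this into $\phi^L(U_h)+\phi^{NL}(\cdot,U_h)$, while the constant term $\lambda^*\mathcal{P}_t[1](x)=\lambda^*$ (by conservativity of $\mathcal{P}$) combines with $\mathcal{P}_t[f](x)$ and $-\lambda^*\mathcal{P}_t[e^{-h}](x)$ to produce the correct forcing $\mathcal{P}_t[f+\lambda^*(1-e^{-h})](x)$.

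For the Markov property and equality of laws as processes, the joint Laplace computation above says that under $\mathbf{P}_\mu$ the pair $(\Delta_t,Z_t)$ has the same law as $(X_t,N)$, where $X_t\sim\mathbb{P}_\mu$ and, conditional on $X_t$, the measure $N$ is a Poisson random measure with intensity $\lambda^*X_t$. In particular, conditional on $\Delta_t$, the backbone $Z_t$ is Poisson with intensity $\lambda^*\Delta_t$, so the Poissonian coupling between $Z$ and $\Delta$ is preserved along paths. Combined with the Markov property of the pair $(\Delta,Z)$, which is built into the construction (future evolution depends only on the current state through the independent $\mathcal{P}$-motions of the backbone particles, the independent Dynkin-Kuznetsov immigration, and the independent branch-point immigrants), a standard induction on the number of time points upgrades the one-dimensional match to equality of finite-dimensional distributions with $(X,\mathbb{P}_\mu)$ and delivers the Markov property of $\Delta$ alone. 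I expect the principal obstacle to lie in the algebraic verification that $U_h$ solves (\ref{ed2}), because one must track carefully how $\phi^{NL,*}$ interacts with the averaging kernel $\pi(x,\cdot)$ when $e^{-v_{f,h}}$ appears inside; everything else is essentially bookkeeping.
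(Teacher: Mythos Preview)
Your proposal is correct and follows essentially the same route as the paper. The paper organizes the argument around the slightly redundant two-sided identity $u_f^*+\lambda^*(1-e^{-v_{f,h}})=u^*_{\lambda^*(1-e^{-h})+f}+\lambda^*(1-e^{-v_{\lambda^*(1-e^{-h})+f,0}})$, but proves it exactly as you do, by checking that each side solves (\ref{ed2}) with initial datum $f+\lambda^*(1-e^{-h})$ and invoking uniqueness; the Markov property is then obtained, as in your sketch, from the fact that $Z_t$ is conditionally Poisson with intensity $\lambda^*\Delta_t$ together with the Markov property of the pair $(\Delta,Z)$.
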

\begin{proof}
The proof is guided by the calculations found in the proof of Theorem 2 of \cite{am}. We start by addressing the claim that $(\Delta,\mathbf{P}_{\mu})$ is a Markov process. Given the Markov property of the pair $(\Delta, Z)$, it suffices to show that given $\Delta _t$ the atomic measure $Z_t$ is equal in law to a Poisson random measure with intensity $\lambda^*\Delta_t(dx)$. Thanks to Campbell's formula for Poisson random
measures, this is equivalent to showing that for all $h \in bp(\mathbb{R}^d)$,
\begin{equation}
\mathbf{E}_{\mu}(e^{-\langle h,Z_t\rangle}|\Delta_t)=\exp\{-\langle \lambda^*(1-e^{-h}),\Delta_t\rangle\},\notag
\end{equation}
which in turn is equivalent to showing that for all $f,h\in bp(\mathbb{R}^d)$,
\begin{equation}\label{f1}
\mathbf{E}_{\mu}(e^{-\langle h,Z_t\rangle-\langle f,\Delta_t\rangle})=\mathbf{E}_{\mu}(e^{-\langle \lambda^*(1-e^{-h})+f,\Delta_t\rangle}).
\end{equation}
Note from (\ref{f2}) however that when we randomize $\nu$ so that it has the law of a Poisson random measure with intensity $\lambda^*\mu(dx)$, we find the identity
\begin{equation}
\mathbf{E}_{\mu}(e^{-\langle h,Z_t\rangle-\langle f,\Delta_t\rangle})=e^{-\langle u_f^*(\cdot,t)+\lambda^*(1-e^{-v_{f,h}(\cdot,t)}),\mu\rangle}.\notag
\end{equation}
Moreover, if we replace $f$ by $\lambda^*(1-e^{-h})+f$ and $h$ by 0 in (\ref{f2}) and again randomize $\nu$ so that it has the law of a Poisson random measure with intensity $\lambda^*\mu(dx)$ then we get
\begin{equation}
\mathbf{E}_{\mu}(e^{\langle \lambda^*(1-e^{-h})+f,\Delta_t\rangle})=\exp\left\{\langle u^*_{\lambda^*(1-e^{-h})+f}(\cdot,t)+\lambda^*(1-\exp\{-v_{\lambda^*(1-e^{-h})+f,0}(\cdot,t)\}),\mu\rangle\right\}.\notag
\end{equation}
These last two observations indicate that (\ref{f1}) is equivalent to showing that for all $f,h\in bp(\mathbb{R}^d)$, $x\in \mathbb{R}^d$ and $t\geq0$,
\begin{equation}\label{f3}
u_f^*(x,t)+\lambda^*(1-e^{-v_{f,h}(x,t)})=u^*_{\lambda^*(1-e^{-h})+f}(x,t)+\lambda^*(1-e^{-v_{\lambda^*(1-e^{-h})+f,0}(x,t)}).
\end{equation}
Note that both left and right hand side of the equality above are necessarily non-negative given that they are Laplace exponents of the left and right hand sides of (\ref{f1}). Making use of (\ref{ed2}), (\ref{ed3}), 
and (\ref{ed4}), it is computationally straightforward to show that both left and right hand side of (\ref{f3}) solve (\ref{ed2}) with initial condition $f+\lambda^*(1-e^{-h})$. Since (\ref{ed2}) has a unique solution with this initial condition, namely $u_{f+\lambda^*(1-e^{-h})}(x,t)$, we conclude that (\ref{f3}) holds true. The proof of the claimed Markov property is thus complete.
\par Having now established the Markov property, the proof is complete as soon as we can show that $(\Delta, \mathbf{P}_{\mu})$ has the same semi-group as $(X,\mathbb{P}_{\mu})$. However, from the previous part of the proof we have already established that when $f,h\in bp(\mathbb{R}^d)$,
\begin{equation}
\mathbf{E}_{\mu}(e^{-\langle h,Z_t\rangle-\langle f,\Delta_t\rangle})=e^{-\langle u_{\lambda^*(1-e^{-h})+f},\mu\rangle}=\mathbb{E}_{\mu}(e^{-\langle f+\lambda^*(1-e^{-h}),X_t\rangle}).\notag
\end{equation}
In particular, choosing $h=0$ we find
\begin{equation}
\mathbf{E}_{\mu}(e^{-\langle f,\Delta_t\rangle})=\mathbb{E}_{\mu}(e^{-\langle f,X_t\rangle}),\notag
\end{equation}
which is equivalent to the equality of the semigroups of $(\Delta,\mathbf{P}_{\mu})$ and $(X,\mathbb{P}_{\mu})$.
\end{proof}

\noindent{\bf Acknowledgments.} The authors want to thank the comments of the  anonymous referee which improved the presentation of the paper.

\end{document}